\renewcommand{\epsilon}{\varepsilon}
\def\A{\mathbb{A}}
\theoremstyle{plain}
\newtheorem{theorem}{Theorem}%[section]
\newtheorem{prop}[theorem]{Proposition}
\newtheorem{lem}[theorem]{Lemma}
\theoremstyle{definition}
\newtheorem{defi}[theorem]{Definition}%[section]
\theoremstyle{remark}
\newtheorem{remark}{Remark}
\DeclareMathOperator{\fact}{Fact}  
\begin{document}

%\linenumbers 

\title{Anti-Powers in Primitive Uniform Substitutions}

\author{
Mickaël Postic

\textit{Institut Camille Jordan, Universit\'e Claude Bernard Lyon 1,  Lyon, France}

\texttt{postic@math.univ-lyon1.fr}
}

\date{}

\sloppy  

\maketitle

\begin{abstract}
In a recent work, A. Berger and C. Defant showed that if $x$ is a fixed point of a binary uniform and primitive morphism, then there exists a constant $C$ such that for all positive integers $i,k,$ beginning in position $n$ in $x$ is a $k$-anti-power with block length at most $Ck$. They ask whether this result extends to a broader class of morphic words. In this note we extend their results to fixed points of uniform primitive morphisms on arbitrary finite alphabets. Our methods make use of the recognisability of uniform primitive morphisms. This result was proved independantly by S. Garg, using a different technique \cite{Garg}.
\end{abstract}
%%%%
\section{Introduction}
Since its introduction by Fici et al.~\cite{icalp}, the notion of \textit{anti-powers} has generated a lot of interest \cite{BeDe,FiPoSi,De,Gaetz,Burcroff,Antipowersinstring}. A $k$-anti-power is the concateation of $k$ blocks of same lengths that are pairwise different. In one of the latest contributions, A. Berger and C. Defant~\cite{BeDe} studied the block length of anti-powers arising in morphic words. Up to changing a little the definitions, Theorem 5 in ~\cite{BeDe} can be formulated the following way:
\begin{theorem}\label{theorem1}
If $w$ is aperiodic, fixed point of a primitive binary uniform morphism, then there is a constant $C=C(w)$ such that $\forall n,\ k \in \mathbb{N},\ w$ contains a $k$-anti-power with blocks of length at most $Ck$ beginning at its $n^{th}$ position.
\end{theorem}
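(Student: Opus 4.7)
The plan is to exploit the self-similar structure of $w$ through recognisability of $\sigma$. Let $\sigma \colon \mathcal{A}^{\ast} \to \mathcal{A}^{\ast}$ be a primitive uniform morphism of length $\ell$ with aperiodic fixed point $w$, and let $L_0$ be a recognisability constant: any factor of $w$ of length at least $L_0$ uniquely determines the positions of the $\sigma$-block boundaries it meets, and this extends to $\sigma^t$ with constant $L_0 \ell^t$.

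Given $n, k \in \mathbb{N}$, I would seek a block length of the form $m = M \ell^t$, with $t$ chosen so that $\ell^t$ is a large enough multiple of $L_0$ (so recognisability bites) but small compared to the target $Ck$. Under these conditions, each block $B_i = w[n+im, n+(i+1)m)$ contains several complete $\sigma^t$-images, and by recognisability the equality $B_i = B_j$ is equivalent to the equality of the corresponding desubstituted factors $\widetilde{B}_i, \widetilde{B}_j$ in $w$, which have length $M + O(1)$ and start at positions $q, q+M, \ldots, q+(k-1)M$ for some $q$ approximately equal to $n/\ell^t$. Hence finding a $k$-anti-power at position $n$ with blocks of length $m$ in $w$ reduces to finding $k$ pairwise distinct factors of length $M + O(1)$ at this arithmetic progression of positions.

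The combinatorial heart, and main obstacle, is producing such an $M$ with $M \leq Dk$ for a constant $D$ depending only on $\sigma$. I would let $M$ range over an interval such as $[D_0 k, 2 D_0 k]$ and bound the number of ``bad'' triples $(i, j, M)$ for which $\widetilde{B}_i = \widetilde{B}_j$: any such coincidence forces $w$ to contain a factor of length $(j-i+1)M$ with period $(j-i)M$ at position $q + iM$, and the bounded critical exponent together with the linear factor complexity $p_w(M) = \Theta(M)$ of primitive substitution fixed points should restrict how often such configurations occur along the progression. A further application of recognisability to the occurrence set of each fixed pattern should then show that the number of bad $M$'s is strictly smaller than the length of the candidate interval, yielding a good $M$ and therefore the required anti-power. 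Tracking the cumulative $O(1)$ slack coming from the boundary terms in the desubstitution, and checking that it remains absorbed into the constant $C$ uniformly in $(n, k)$, is the main bookkeeping hurdle.
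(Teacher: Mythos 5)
Your proposal shares its starting point with the paper (Mossé recognisability, lifting it to the powers $\sigma^t$ with constant scaling like $\ell^t$), but then diverges, and the divergence is where the gap lies. By taking the block length to be a multiple $M\ell^t$ of $\ell^t$, you place all $k$ block starts in the \emph{same} residue class modulo $\ell^t$, so recognisability gives you nothing beyond the reduction to desubstituted factors $\widetilde{B}_0,\dots,\widetilde{B}_{k-1}$ sitting at an arithmetic progression of positions with common difference $M$. You must then prove that some $M\in[D_0k,2D_0k]$ makes these $k$ factors pairwise distinct, and this step --- which you yourself flag as ``the combinatorial heart, and main obstacle'' --- is only asserted to follow from bounded critical exponent, linear factor complexity, and ``a further application of recognisability.'' That is not a proof. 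A coincidence $\widetilde{B}_i=\widetilde{B}_j$ only forces a factor of exponent $1+\frac{1}{j-i}$, which for large $j-i$ is far below any critical-exponent threshold, so the exponent bound alone excludes almost nothing; one would need a genuinely quantitative count of bad triples $(i,j,M)$ against the $\sim k^2$ pairs and the $\sim k$ candidate values of $M$, and you give no such count. (This is essentially the hard part of the Berger--Defant argument in the binary case, and it does not transfer for free.)

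The paper sidesteps this entirely with a choice you should note: it takes the block length to be $N'm^i+1$ with $m^{i-1}\le k<m^i$, i.e.\ congruent to $1$ rather than $0$ modulo $m^i$. Then the $k$ starting positions $n+s(N'm^i+1)$ are pairwise \emph{incongruent} modulo $m^i$ (they run through $n, n+1,\dots,n+k-1 \bmod m^i$, all distinct since $k< m^i$), while recognisability of $\sigma^i$ with constant $N'm^i$ (the paper's Lemma \ref{lemme1}, proved via Proposition \ref{proposition1}) forces any two \emph{equal} blocks of this length to start at congruent positions modulo $m^i$. Hence no two blocks can be equal, and no combinatorial counting is needed at all. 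If you want to salvage your route, you would have to either supply the missing counting argument in full, or switch to a block length that is $\not\equiv 0 \pmod{\ell^t}$ so that recognisability alone separates the blocks. You should also note that the uniform scaling of the recognisability constant for $\sigma^t$, which you assume in one line, itself requires proof; it is the content of the paper's Lemma \ref{lemme1}.
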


They also asked to what extent are these results generalisable to a broader class of morphic words, and in particular if it was still true without the binary condition. Using the notion of recognisability first introduced by B. Moss\'e in \cite{Mosse1996}, we show that their results extend to fixed points of uniform primitive morphisms on arbitrary finite alphabets:
\begin{theorem}\label{mainthrm}
If $\sigma$ is primitive and $m$-uniform, with an aperiodic fixed point $x$, there exist a constant $C=C(\sigma)$ such that : $\forall y \in X(\sigma),\ \forall n,k \in \mathbb{N}, \ y$ contains a $k$-antipower with block length at most $Ck$ starting at position $n$.
\end{theorem}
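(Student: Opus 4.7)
I would begin with two standard reductions. By shift-invariance of $X(\sigma)$, replacing $y$ by $S^n y$ lets us assume $n = 0$. Since $\sigma$ is primitive, $X(\sigma)$ is minimal, and consequently the length-$L$ prefix of any $y \in X(\sigma)$ coincides with a length-$L$ factor of the fixed point $x$ at some position. Taking $L = C k^2$ (the maximal possible total length of a $k$-antipower of block length at most $Ck$), it suffices to prove the theorem for $x$ at every position $n \in \mathbb{N}$: a $k$-antipower at the matching position in $x$ transfers verbatim to position $0$ in $y$.

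To analyse blocks at position $n$ in $x$ I invoke Moss\'e's recognisability theorem: for $t$ large enough, $\sigma^t$ is one-to-one on factors of $X(\sigma)$ of any fixed bounded length, and the identity $x = \sigma^t(x)$ provides a canonical hierarchy of macro-letters of length $m^t$. Fix a level $t$ depending only on $\sigma$, and set $\rho := n \bmod m^t$, $p := \lfloor n / m^t \rfloor$. For the candidate block length $\ell := s\, m^t$ with $s$ of order $k$, the $k$ candidate blocks $B_i := x[n + i\ell, n + (i+1)\ell - 1]$ all share the same offset $\rho$ inside a macro-letter, so each $B_i$ is entirely determined by the factor $w_i$ of $x$ of length $s$ (when $\rho = 0$) or $s+1$ (when $\rho > 0$) starting at macro-position $p + is$. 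Recognisability turns the implication $B_i = B_j \Rightarrow w_i = w_j$ into an equivalence, and the bound $\ell \le C k$ holds with $C := m^t$. The question becomes: can one exhibit $k$ pairwise distinct consecutive factors of $x$ of that length starting at macro-position $p$?

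The main obstacle is this combinatorial claim, which is essentially the theorem itself at a smaller scale; a naïve induction therefore does not close. My plan is to argue by contradiction. Suppose that for every $\ell$ in a window of the form $[k/\alpha,\, Ck]$, where $\alpha$ is the linear factor-complexity constant of $X(\sigma)$, no $k$-antipower of block length $\ell$ starts at $n$. Then for each such $\ell$ there are indices $i(\ell) < j(\ell) \le k-1$ with $B_{i(\ell)} = B_{j(\ell)}$, imposing a period $(j(\ell)-i(\ell))\ell$ on a factor of $x$ of length at least $(j(\ell)-i(\ell)+1)\ell$. Aggregating these period relations over the range of $\ell$ via the theorem of Fine and Wilf, and using recognisability to constrain the admissible pairs $(i(\ell), j(\ell))$ and the location of the repeats relative to the macro-structure, should force a nontrivial global period on an arbitrarily long factor of $x$, contradicting its aperiodicity. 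The delicate step is the quantitative balance between the Fine-Wilf input and the range of admissible $\ell$'s permitted by recognisability; this calibration is what ultimately pins down the constant $C = C(\sigma)$.
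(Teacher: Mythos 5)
There is a genuine gap, and it sits exactly where you locate it yourself: the ``combinatorial claim'' at the macro level. Your reduction chooses the block length $\ell = s\,m^t$ to be a multiple of the macro-letter length, which collapses the problem to the same statement one level up and forces you into the Fine--Wilf/periodicity argument. That argument is only sketched (``should force a nontrivial global period''), and it is not clear it can be calibrated: a single coincidence $B_{i(\ell)}=B_{j(\ell)}$ for each $\ell$ gives local periods at scattered positions with unrelated period lengths, and aperiodicity of $x$ does not obviously preclude such a family of local repetitions without a much more careful quantitative analysis. As written, the core of the theorem is not proved.

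The paper's proof avoids this circularity entirely by making the opposite choice of block length. Pick $i$ with $m^{i-1}\le k<m^i$ and take blocks of length $N'm^i+1$, i.e.\ congruent to $1 \pmod{m^i}$ rather than to $0$. Then the $k$ consecutive blocks start at positions $n+s(N'm^i+1)$, $0\le s<k$, whose residues modulo $m^i$ are pairwise distinct because $k<m^i$. On the other hand, recognizability of $\sigma^i$ forces two equal blocks of length $>N'm^i$ to start at positions that are congruent modulo $m^i$; hence all $k$ blocks are distinct, with no induction and no periodicity argument. The one nontrivial ingredient --- which your proposal also does not supply --- is that the recognizability constant of $\sigma^i$ can be taken to grow only linearly, namely $\le N'm^i$ for a single $N'=N'(\sigma)$ and all $i$ (the paper's Lemma~\ref{lemme1}, proved via Moss\'e's theorem, uniform recurrence, and the fact that deep enough images $\sigma^{N_1+r}(a)$ can only occur as genuine $\sigma^i$-images). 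Fixing a single level $t$, as you propose, cannot work: you need the level $i$ to grow with $k$ so that $m^i>k$, and therefore you need recognizability at every level with controlled constants. I would recommend reworking your argument around a block length that is coprime to (or at least not divisible by) $m^i$, at which point the contradiction argument becomes unnecessary.
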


\section{Preliminaries}
We begin by recalling some basic notions pertaining to morphisms and words.

Let us fix, for the rest of this article, a set $\mathbb{A}$ called the \textit{alphabet} whose elements are called \textit{letters}. The set of \textit{finite non empty words} $\A^{+}$ is $\bigcup\limits_{n\in \mathbb{N}}\mathbb{A}^n$, while $\mathbb{A}^*=\mathbb{A}^{+} \cup \lbrace \epsilon \rbrace$ is the set of finite words ($\epsilon$ is the \textit{empty} word). For a word $w \in \mathbb{A}^n$ we write $n=|w|$ the \textit{length} of $w$. Let $\mathbb{A}^{\mathbb{N}}=\lbrace a_1a_2\cdots | a_i\in \mathbb{A}\rbrace$ be the set of \textit{infinite words} over $\mathbb{A}$. For a finite or infinite word $w$, the notation $w_{[n,m]}$ will refer to the \textit{factor} $w_nw_{n+1}\cdots w_m$. The set of factors of $w$ will be denoted by $\fact (w)$.
A \textit{morphism} $\sigma$ over $\mathbb{A}$ is a map $\sigma :\ \mathbb{A}^* \rightarrow \mathbb{B}^*$ where $\mathbb{B}$ is another alphabet such that $\sigma(ww')=\sigma(w)\sigma(w')$. A \textit{substitution} $\sigma$ over $\mathbb{A}$ is a morphism where $\mathbb{A}=\mathbb{B}$.
A \textit{uniform} morphism $\sigma$ is a morphism of constant length over letters: $\forall\ a, b \in \mathbb{A},\ |\sigma(a)|=|\sigma(b)|$. 

\begin{remark}
In \cite{BeDe} the term morphism is used instead of substitution. We decided to stick to the term substitution, since it is commonly used in articles pertaining to recognizability. 
\end{remark}

\begin{defi}\label{primitif}
Let $\mathbb{A}=\lbrace a_1, \cdots , a_r\rbrace$. A morphism $\sigma$ over $\mathbb{A}$ is said to be \textit{primitive} if:
\[ \exists n,\ \forall i, \ \forall j,\ a_j \text{ occurs in } \sigma^n(a_i) .\]
\end{defi}
For a morphism $\sigma$, $x\in \mathbb{A}^{\mathbb{N}}$ is called a \textit{fixed point} if $x=\sigma(x)$. The \textit{shift orbit closure} $X(x)$ is the closure under the natural topology on $\mathbb{A}^{\mathbb{N}}$ of the orbit of $x$ under the shift operator $\tau : a_1a_2\cdots \rightarrow a_2a_3\cdots$. If $\sigma$ is primitive, it is easy to see that $X(x)=X(y)$ for any $x$ and $y$ fixed points of $\sigma$. Hence we can define $X(\sigma)=X(x)$ in this case.
\begin{defi}\label{reconnaissable}
A $m$-uniform primitive morphism $\sigma$ is said to be \textit{recognizable} if $\exists N \in \mathbb{N}$ such that $\forall y \in X(\sigma), \forall w\in \mathbb{A}^{+}, \sigma(y)_{[ \alpha,\alpha+|w|-1 ]}=\sigma(y)_{[ \beta,\beta+|w|-1 ]}=w$ with $|w| \geq N$ and $\alpha=0 \pmod m $ then $\beta=0 \pmod m $. $N$ is refered to as \textit{recognizability constant} of $\sigma$ in this article.
\end{defi}

\begin{remark}
Let $\sigma$ be a $m$-uniform primitive recognizable morphism and $N$ given by Definition \ref{reconnaissable}. Then $\forall y \in X(\sigma), \forall w\in \mathbb{A}^{+}, \sigma(y)_{[ \alpha,\alpha+|w|-1 ]}=\sigma(y)_{[ \beta,\beta+|w|-1 ]}=w$ with $|w| \geq N+m$ gives $\alpha=\beta \pmod m $.
\end{remark}
\begin{proof}
Denote $h=\beta -\alpha$ and $w'= \sigma(y)_{[ m\lceil \frac{\alpha}{m}\rceil,\alpha+|w|-1 ]}.$ Then $|w'| \geq N$ and $w'= \sigma(y)_{[ h+m\lceil \frac{\alpha}{m}\rceil,h+\alpha+|w|-1 ]}.$ By definition, this implies $h= 0 \pmod m$ hence $\alpha=\beta \pmod m $.
\end{proof}

\begin{remark}
If $r=|\mathbb{A}|=2$, an aperiodic word $w$ that is a fixed point of a $m$-uniform morphism $\sigma$ is uniformly recurrent if and only if $\sigma$ is primitive.
\end{remark}
\begin{proof}
Let us first suppose $\sigma$ is primitive. Without loss of generality, let us fix $w=\sigma^{\infty}(0)$. Let us denote $\mathbb{A}=\lbrace 0;1\rbrace$ with $w=\sigma^{\infty}(0)$. It is easy to see that the $n$ in Definition \ref{primitif} is 2 or 1. Let then $x$ be a factor of $w$. There exists $k$ such that $x \in \fact(\sigma^k(0))$. Every factor of $w$ of length at least $2m^{k+2}$ contains, for some $a \in \mathbb{A}$, $\sigma^{k+2}(a)$, so it contains $\sigma^{k}(0)$ hence $x$, and so $w$ is uniformly recurrent.

Let us now suppose $\sigma$ is not primitive. Since $\sigma$ is $m$-uniform, $w$ has to be eventually periodic. Indeed, if $\sigma(0)=0^m$, then $w=0^{\infty}$. If not and $\sigma(1)=1^m$, then $w$ contains arbitrary long plages of 1 ($1^{\infty}\in X(w)$), hence arbitrary long factors do not contain the factor 0. The only option left is $\sigma(0)=1^m\ \text{and } \sigma(1)=0^m$. But this leads to no fixed point ($\sigma(0)$ must start with a 0).
\end{proof}

%\begin{remark}
%Every morphic word is a fixed point of the underlying morphism. {\bf What does this mean? You haven't even defined what is a morphic word. Generally a morphic word is taken as the image under a morphism of a fixed point of a morphism. But anyway you need to be much more precise.}
%\end{remark}

We will now give some well-known results on substitutions and recognizability that we will need later:
\begin{theorem}[Corollaire 3.2 in \cite{Mosse1996}]\label{theorem3}
Let $\sigma$ be a primitive $m$-uniform substitution and let $x$ be aperiodic such that $\sigma(x)=x$. Then $\sigma$ is recognizable.
\end{theorem}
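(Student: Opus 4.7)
My plan is to prove the contrapositive: if $\sigma$ is not recognizable then the fixed point $x$ is periodic, contradicting the hypothesis. Failure of recognizability means that for every $N$ one can exhibit some $y_N\in X(\sigma)$, a factor $w_N$ of $\sigma(y_N)$ of length at least $N$, and two occurrences of $w_N$ in $\sigma(y_N)$ at positions $\alpha_N\equiv 0\pmod m$ and $\beta_N\not\equiv 0\pmod m$. Setting $h_N := \beta_N-\alpha_N$ (which is nonzero mod $m$), these two occurrences say precisely that the length-$N$ factor of $\sigma(y_N)$ starting at position $\alpha_N$ admits the period $h_N$. Translating $\sigma(y_N)$ so that $\alpha_N$ is at position $0$ and passing to a subsequence, compactness of $X(\sigma)$ produces a limit $z\in X(\sigma)$ whose arbitrarily long prefixes inherit these periods.

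First, suppose a subsequence of $(h_N)$ stays bounded. Pigeonhole extracts a constant period $h$ with $h\not\equiv 0\pmod m$, and then $z$ has a periodic infinite right-ray of period $h$. Primitivity of $\sigma$ implies that $X(\sigma)$ is minimal and uniformly recurrent, so every factor of $z$ is a factor of $x$; in particular $x$ contains arbitrarily long factors with the common period $h$. A standard lemma about uniformly recurrent shifts then forces $x$ to be purely periodic: an arbitrarily long $h$-periodic factor $u$ of $x$ must, by uniform recurrence, recur within distance $R=R(|u|)$ of itself, and choosing $|u|>R+h$ and applying Fine--Wilf to two overlapping occurrences of $u$ shows that $x$ is globally $h$-periodic, contradicting aperiodicity.

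Second, suppose $(h_N)$ is unbounded. Here we must use the $m$-uniform structure of $\sigma$. The block alignment $\alpha_N\equiv 0\pmod m$ cuts $w_N$ into a concatenation of images $\sigma(a)$, while the misaligned occurrence at $\beta_N$ provides a second cutting shifted by $r_N := h_N\bmod m\in\{1,\ldots,m-1\}$. Comparing the two cuttings on a common range yields, for every letter $c$ of $y_N$ whose image lies entirely inside $w_N$, an identity of the form $\sigma(c)=\mathrm{suf}_{m-r_N}(\sigma(b))\cdot\mathrm{pref}_{r_N}(\sigma(b'))$ with $b,b'\in\mathbb{A}$. Since $w_N$ simultaneously carries the period $h_N$ and the visible $m$-block structure, Fine--Wilf applied to the two periods $h_N$ and $m$ inside $w_N$ collapses the large period to $\gcd(h_N,m)$ at the cost of shortening the factor by an amount $O(m)$; iterating this descent a bounded number of times brings us back to the bounded-period case treated above. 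The main obstacle is precisely this descent: one must verify that each step loses only $O(m)$ length, that the resulting shorter-periodic factor can still be realized inside $X(\sigma)$ (so that the extraction to $z$ makes sense after descent), and that primitivity prevents degeneracy — concretely, that no $\sigma^n(a)$ is internally periodic with a small period compatible with $h$, since such internal periodicity would propagate through iteration of $\sigma$ and eventually force periodicity of $x$ independently, closing the argument in either case. This delicate bookkeeping is the technical core of Mossé's original proof in \cite{Mosse1996}.
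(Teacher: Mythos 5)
This statement is not proved in the paper at all: it is imported verbatim as Corollaire 3.2 of Moss\'e \cite{Mosse1996}, so there is no internal argument to compare yours against, and you are in effect attempting to reprove Moss\'e's recognizability theorem. Judged on its own terms, your bounded-$h_N$ branch is essentially sound: two occurrences of $w_N$ at distance $h_N$ give a factor of length $|w_N|+h_N$ with period $h_N$, and a uniformly recurrent aperiodic word cannot contain arbitrarily long factors with a fixed period (the limit point would be a periodic element of the minimal system $X(\sigma)$, forcing $X(\sigma)$ to be finite). That part is a correct, standard argument.

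The unbounded-$h_N$ branch, however, is where Moss\'e's theorem is actually hard, and your argument there does not work. The pivotal step, ``Fine--Wilf applied to the two periods $h_N$ and $m$ inside $w_N$,'' is unavailable: $w_N$ does not have period $m$. The $m$-block structure of $\sigma(y_N)$ is a distinguished \emph{cutting} of the word into images $\sigma(a)$, not a periodicity of its letters, so there is no second period to feed into Fine--Wilf and no descent from $h_N$ to $\gcd(h_N,m)$. Worse, when $h_N\geq |w_N|$ the two occurrences are disjoint and ``period $h_N$'' carries no information, so the periodicity framing degenerates exactly in the case you need it. Finally, you explicitly defer the remaining work to ``the technical core of Moss\'e's original proof,'' which concedes that the decisive argument is not supplied: that core proceeds by a different mechanism (bounding the number of distinct $\sigma$-interpretations, i.e.\ cuttings with offsets, of sufficiently long factors, and exploiting linear recurrence of primitive substitution words), not by a Fine--Wilf descent. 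As written, the proposal is a proof only of the easy half of the dichotomy.
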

We will also use the following proposition from \cite{HoZa} (actually what is proved is somewhat stronger, but we only need this formulation):
\begin{prop}\label{proposition1}
If $\sigma$ is a $m$-uniform morphism, and $x$ is aperiodic with $x=\sigma(x)$ and $x_0=a$, then $\exists N_1 \in \mathbb{N}$ such that $l\geq N_1$ implies that each occurrence $\sigma^l(a)$ in $x$ is the image under $\sigma$ of an occurrence of $\sigma^{l-1}(a)$ in $x$.
\end{prop}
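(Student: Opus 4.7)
The plan is to deduce the statement from recognizability: any sufficiently long occurrence of $\sigma^{l}(a)$ in $x$ is forced to start at a position divisible by $m$, and once this alignment is established, the relation $x=\sigma(x)$ lets me ``desubstitute'' the occurrence and read off the corresponding occurrence of $\sigma^{l-1}(a)$ one level down.

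Assume first that $\sigma$ is primitive so that Theorem~\ref{theorem3} applies; let $N$ be its recognizability constant, and choose $N_{1}$ large enough that $m^{N_{1}}\geq N+m$. Fix $l\geq N_{1}$ and let $p$ be the starting position of any occurrence of $w:=\sigma^{l}(a)$ in $x$. The crucial idea is to introduce a \emph{reference} occurrence of $w$ that is manifestly $\sigma$-aligned: since $x_{0}=a$ and $x=\sigma^{l}(x)$, the prefix $x_{[0,m^{l}-1]}$ equals $w$, so $w$ occurs at position $0$. Both occurrences have length $m^{l}\geq N+m$, so applying the Remark following Definition~\ref{reconnaissable} to them gives $p\equiv 0\pmod m$; write $p=mq$.

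Using $x=\sigma(x)$ together with the alignment $p=mq$, I can rewrite
\[
\sigma^{l}(a) \;=\; x_{[p,p+m^{l}-1]} \;=\; \sigma\bigl(x_{[q,q+m^{l-1}-1]}\bigr),
\]
while by definition $\sigma^{l}(a)=\sigma(\sigma^{l-1}(a))$. Hence the factor $x_{[q,q+m^{l-1}-1]}$ has the same length and the same $\sigma$-image as $\sigma^{l-1}(a)$; comparing letter-by-letter (see below for the subtle point) yields $x_{[q,q+m^{l-1}-1]} = \sigma^{l-1}(a)$, so the occurrence of $\sigma^{l}(a)$ at $p$ is literally $\sigma$ applied to the occurrence of $\sigma^{l-1}(a)$ at $q$, as required.

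The main obstacle I anticipate is precisely this last letter-by-letter comparison when $\sigma$ fails to be injective on letters: equality of $\sigma$-images does not a priori force equality of preimages. In the setting of \cite{HoZa} this is handled by a preliminary reduction to the letter-injective case, which is legitimate because the aperiodicity of $x$ prevents two letters from remaining indistinguishable under iteration of $\sigma$; once this reduction is in place, the alignment via recognizability and the desubstitution via $x=\sigma(x)$ are essentially forced and present no further difficulty. This reduction is also what allows the statement to be made without assuming primitivity of $\sigma$, though for the use of the proposition in this paper the primitive case suffices.
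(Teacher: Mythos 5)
First, a point of comparison: the paper does not prove Proposition~\ref{proposition1} at all --- it is imported from \cite{HoZa} --- so your attempt has to be judged on correctness alone, and it falls short at exactly the point you flag.

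The alignment step is fine: comparing the occurrence of $w=\sigma^{l}(a)$ at $p$ with the prefix occurrence at $0$ via the Remark after Definition~\ref{reconnaissable} does give $p=mq$ (granting primitivity, which is all the paper needs). The genuine gap is the last step, and your proposed repair does not work. From $\sigma\bigl(x_{[q,q+m^{l-1}-1]}\bigr)=\sigma\bigl(\sigma^{l-1}(a)\bigr)$ you cannot conclude $x_{[q,q+m^{l-1}-1]}=\sigma^{l-1}(a)$ unless $\sigma$ is injective on letters, and the claim that ``aperiodicity prevents two letters from remaining indistinguishable under iteration'' is false: take $\sigma(0)=01$, $\sigma(1)=20$, $\sigma(2)=20$. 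This substitution is primitive and $2$-uniform, its fixed point $x=01202001\cdots$ is aperiodic (it projects onto the Thue--Morse word by identifying $1$ and $2$), both $1$ and $2$ occur in $x$, and yet $\sigma^{n}(1)=\sigma^{n}(2)$ for every $n\geq 1$. Identifying such letters changes the alphabet, the word $x$, and the statement being proved, so there is no cheap ``reduction to the letter-injective case.'' In this example, excluding the possibility that $x_{[q,q+2^{l-1}-1]}$ is the $1\leftrightarrow 2$--swapped variant of $\sigma^{l-1}(0)$ amounts to constraining $q$ itself modulo $2$, i.e.\ to recognizability of $\sigma^{2}$ with a controlled constant --- which is Lemma~\ref{lemme1}, proved in the paper \emph{from} Proposition~\ref{proposition1}, so appealing to it here would be circular. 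What is actually needed is either the letter-identifying (strong) form of Moss\'e's theorem, rather than the cutting-point form recorded in Definition~\ref{reconnaissable}, or the genuinely different argument of \cite{HoZa}. As written, your proof is complete only when $\sigma$ is injective on letters.
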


\section{Main Part}%%%%%%%%%%%%%%%%%%%%%%%%%%%%%%%%%%%%%%%%%%%%%%%%%%%%%%%%%%%%%%%%%%%%%%%%%%%%%%%%%%%%  
The goal of this part is to prove Therorem \ref{mainthrm}.

We will first give a lemma that is easily deduced from Proposition \ref{proposition1}: 
\begin{lem}\label{corollaire1}
Let $\sigma$ be a $m$-uniform morphism, and $x$ aperiodic with $x=\sigma(x)$ and $x_0=a$. Let $N_1 \in \mathbb{N}$ be given by Proposition \ref{proposition1}. Then for every $r,\ l\in \mathbb{N}$, each occurrence $\sigma^{l+N_1+r}(a)$ in $x$ is the image under $\sigma^l$ of an occurrence of $\sigma^{N_1+r}(a)$ in $x$.
\end{lem}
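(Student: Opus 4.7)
The plan is to prove this by induction on $l$, iterating Proposition \ref{proposition1}. The parameter $r$ plays essentially no active role; it just slides the index so that all intermediate exponents stay at least $N_1$, which is exactly the condition needed to invoke the proposition.

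For the base case $l=0$, the statement is vacuous since $\sigma^0$ is the identity: an occurrence of $\sigma^{N_1+r}(a)$ is trivially the image under $\sigma^0$ of itself. For the inductive step, I assume the result holds for some $l\geq 0$ and consider an arbitrary occurrence of $\sigma^{(l+1)+N_1+r}(a)$ in $x$. Since $(l+1)+N_1+r\geq N_1$, Proposition \ref{proposition1} applies directly: this occurrence is the image under $\sigma$ of some occurrence of $\sigma^{l+N_1+r}(a)$ in $x$. The induction hypothesis then says that the latter occurrence is itself the image under $\sigma^l$ of an occurrence of $\sigma^{N_1+r}(a)$ in $x$. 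Composing, the original occurrence of $\sigma^{(l+1)+N_1+r}(a)$ is the image under $\sigma^{l+1}$ of an occurrence of $\sigma^{N_1+r}(a)$, closing the induction.

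The only thing to be careful about is that the intermediate occurrences produced by Proposition \ref{proposition1} remain occurrences in $x$ itself (and not in some abstract preimage), so that the hypothesis can be reapplied; but this is exactly the content of that proposition, which locates the preimage inside $x$ via $x=\sigma(x)$. There is no genuine obstacle here: the lemma is a clean iteration, and the role of $r$ is just to keep a safety margin above $N_1$ so that every application of Proposition \ref{proposition1} is legal. No aperiodicity or primitivity argument beyond what is already packaged in Proposition \ref{proposition1} is needed.
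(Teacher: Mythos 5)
Your proof is correct and follows essentially the same route as the paper: induction on $l$, with each step legitimized by the fact that the relevant exponent stays at least $N_1$ so Proposition \ref{proposition1} applies. The only cosmetic difference is the order of peeling — you strip one application of $\sigma$ first and then invoke the induction hypothesis with the same $r$, whereas the paper invokes the hypothesis first (with $r+1$ in place of $r$) and then applies Proposition \ref{proposition1} once more; both compositions are valid.
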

\begin{proof}
By induction on $l$. Initialization is just the result of Proposition \ref{proposition1}. Let $l \in \mathbb{N}$ be fixed. Suppose the result holds for $l$ and let $x_{[ \alpha,\alpha+m^{l+1+N_1+r}-1 ]}=\sigma^{l+1+N_1+r}(a)$. Then by the recurrence hypothesis, $\alpha=m^l\alpha'$ and $x_{[ \alpha',\alpha'+m^{1+N_1+r}-1 ]}=\sigma^{1+N_1+r}(a)$. But now we can apply Proposition \ref{proposition1}: $\alpha'=m\alpha''$ and $x_{[ \alpha'',\alpha''+m^{N_1+r}-1 ]}=\sigma^{N_1+r}(a)$. So $x_{[ \alpha,\alpha+m^{l+1+N_1+r}-1 ]}=\sigma^{l+1+N_1+r}(a)$ is the image under $\sigma^{l+1}$ of an occurrence of $\sigma^{N_1+r}(a)$ in $x$.
\end{proof}

We can now prove the following lemma: 
\begin{lem}\label{lemme1}
If $\sigma$ is a primitive $m$-uniform morphism, and $x $ is aperiodic with $x =\sigma(x)$, $\exists N' \in \mathbb{N}$ such that $\forall i\in \mathbb{N},\ \sigma^i$ is recognizable with a recognizability constant less or equal to $m^iN'$. 
\end{lem}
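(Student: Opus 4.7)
My plan is to induct on $i$, showing that $\sigma^i$ has recognizability constant at most $f(i)$ for a sequence satisfying a linear recurrence, which will solve to $f(i) \leq m^i N'$ for a constant $N'$ depending only on $\sigma$. Let $N$ be the recognizability constant of $\sigma$ furnished by Theorem~\ref{theorem3}; the base case $f(1) = N$ is immediate.

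For the inductive step, I would fix $y \in X(\sigma)$ and suppose the factor $w$ of length at least $f(i)$ appears at positions $\alpha$ and $\beta$ in $\sigma^i(y)$, with $\alpha \equiv 0 \pmod{m^i}$. The first move is to write $\sigma^i(y) = \sigma(\sigma^{i-1}(y))$, observing that $\sigma^{i-1}(y) \in X(\sigma)$, and apply the remark after Definition~\ref{reconnaissable}: as soon as $|w| \geq N + m$ this yields $\alpha \equiv \beta \pmod m$, so that $\beta \equiv 0 \pmod m$. Writing $\alpha = m\alpha_1$ and $\beta = m\beta_1$, the two occurrences of $w$ in $\sigma(u)$ (where $u = \sigma^{i-1}(y)$) are thus both aligned with $\sigma$'s cutting.

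The key step is then to de-substitute one layer. Because both occurrences start at multiples of $m$, the length-$L$ factors of $u$ at positions $\alpha_1$ and $\beta_1$, with $L = \lfloor |w|/m \rfloor$, must share the same image under $\sigma$. Invoking letter-injectivity of $\sigma$---a standard consequence of recognizability for primitive substitutions with an aperiodic fixed point---those two factors of $u = \sigma^{i-1}(y)$ are themselves equal, producing a common factor $w'$ of length $L$ at positions $\alpha_1, \beta_1$, with $\alpha_1 \equiv 0 \pmod{m^{i-1}}$. The inductive hypothesis applied to $\sigma^{i-1}$ then forces $\beta_1 \equiv 0 \pmod{m^{i-1}}$ as soon as $L \geq f(i-1)$, whence $\beta \equiv 0 \pmod{m^i}$ as required.

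Guaranteeing $L \geq f(i-1)$ amounts to $|w| \geq m f(i-1) + m$, so one can take $f(i) = \max(N + m,\, m f(i-1) + m)$. Unwinding the recurrence from $f(1) = N$ gives $f(i) \leq m^{i-1} N + (m^i - m)/(m - 1)$, which is at most $m^i(N + 1)$ for $m \geq 2$; setting $N' = N + 1$ then yields the lemma. The step I expect to require the most care is the de-substitution, which rests on letter-injectivity of $\sigma$: confirming that this is genuinely available from the paper's recognizability framework, rather than being an external assumption, is the main subtlety to pin down.
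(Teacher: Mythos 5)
Your induction scheme is reasonable and the bookkeeping (the recurrence $f(i)=\max(N+m,\,mf(i-1)+m)$ solving to $f(i)\leq m^i(N+1)$) is fine, but the de-substitution step contains a genuine gap, and it is exactly the one you flagged. Letter-injectivity of $\sigma$ is \emph{not} a consequence of primitivity, aperiodicity, or the recognizability used in this paper. For a counterexample, take $\sigma\colon a\mapsto ab,\ b\mapsto ac,\ c\mapsto ab$: it is primitive, its fixed point projects onto the period-doubling word under $a,c\mapsto 0$, $b\mapsto 1$ and is therefore aperiodic, so by Theorem~\ref{theorem3} it is recognizable --- yet $\sigma(a)=\sigma(c)$. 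The point is that Definition~\ref{reconnaissable} is purely \emph{positional}: it tells you that $\beta\equiv 0\pmod m$, but gives you no way to recover the preimage letters, so from $\sigma(u_{[\alpha_1,\alpha_1+L-1]})=\sigma(u_{[\beta_1,\beta_1+L-1]})$ you cannot conclude that these two factors of $u$ are equal, and the inductive hypothesis (which needs the \emph{same} word at both positions) cannot be invoked. The step could be repaired by importing the full strength of Mossé's theorem (which does recover the preimage letter from a sufficiently long window around a cutting point), but that is an external input strictly stronger than what the paper states and uses.

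The paper avoids this issue entirely by not inducting on $i$. Instead it fixes once and for all a word $p=\sigma^{N_1+r}(a)$ (a long prefix of $x$, with $N_1$ from Proposition~\ref{proposition1}) and uses uniform recurrence to guarantee that any $w$ with $|w|\geq 2Mm^i$ contains, inside an $m^i$-aligned block, an occurrence of $\sigma^i(p)=\sigma^{N_1+r+i}(a)$. Lemma~\ref{corollaire1} --- the iterate of Proposition~\ref{proposition1}, which is a de-substitution statement for these \emph{specific} words $\sigma^l(a)$ rather than for arbitrary factors --- then forces every occurrence of $\sigma^{N_1+r+i}(a)$ to start at a multiple of $m^i$, giving $h\equiv 0\pmod{m^i}$ in one stroke. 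In other words, where you try to peel off one layer of $\sigma$ at a time from an arbitrary long factor (which requires recovering preimages), the paper peels off all $i$ layers at once from a single distinguished factor for which Proposition~\ref{proposition1} supplies the needed unique de-substitution. If you want to keep your inductive structure, you must either prove the letter-recovery form of recognizability or restrict the de-substitution to such distinguished factors as the paper does.
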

\begin{proof}
Let then $\sigma$ be an aperiodic $m$-uniform morphism, and $x=\sigma(x)$ with $x_0=a$. Let $N_1$ be given by Proposition \ref{proposition1}, and let $y \in X(\sigma)$.

By Theorem \ref{theorem3}, $\sigma$ is recognizable. Let $N$ be a recognizability constant of $\sigma$ and let $r \in \mathbb{N}$ such that \newline $m^{N_1+r} \geq N+m $. The prefix of $x$ of length $m^{N_1+r}$ is $p=\sigma^{N_1+r}(a)$. Since $\sigma$ is primitive and uniformly recurrent, $\exists M \in \mathbb{N}$ such that every factor of $x$, hence of $y$, of length at least $M$ contains $p$. I then claim that $N'=2M$ has the required property.

Indeed, let $w,\ |w| \geq N'm^i$ be fixed. We show the following:
\[\sigma^i(y)_{[ \alpha,\alpha+|w|-1 ]}=\sigma^i(y)_{[ \beta,\beta+|w|-1 ]}=w \Rightarrow \beta=\alpha \pmod {m^i}  \quad (1).\]
Let $\alpha, \beta$ be as in (1) and $h=\beta-\alpha$. So:
\[\sigma^i(y)_{[ \alpha,\alpha+|w|-1 ]}=\sigma^i(y)_{[ \alpha+h,\alpha+h+|w|-1 ]}=w.\]
By $y\in X(\sigma)$ we get an $\alpha'$ with
\[x_{[ \alpha',\alpha'+|w|-1 ]}=x_{[ \alpha'+h,\alpha'+h+|w|-1 ]}=w. \quad (2)\]
Since $|w|\geq N'm^i=2Mm^i$, there exists $\gamma \geq 0,w' \in F(w)$ with \newline $w' =x_{ [ \gamma m^i,(\gamma+M)m^i-1]}=x_{ [ \gamma m^i+h,(\gamma+M)m^i+h-1]}$. Let $z = x_{ [ \gamma,\gamma+M-1]}$ so $w'=\sigma^i(z)$. \newline
Since $|z|=M, \ \exists \gamma',\ \gamma\leq \gamma' < \gamma'+m^{N_1+r}-1 \leq \gamma+M-1$ such that: 
\[x_{ [ \gamma',\gamma'+m^{N_1+r}-1]}=\sigma^{N_1+r}(a).\]
Applying $\sigma^i$ to $x$ gives $x_{ [ \gamma' m^i,(\gamma'+m^{N_1+r})m^i-1]}=\sigma^{N_1+r+i}(a)$, and by (2),
\[x_{ [ \gamma'm^i,(\gamma'+m^{N_1+r})m^i-1]}=x_{ [ \gamma'm^i+h,(\gamma'+m^{N_1+r})m^i+h-1]}=\sigma^{N_1+r+i}(a).\]
Using Lemma \ref{corollaire1}, this implies $h=0 \pmod{m^i}$: $x_{ [ \gamma'm^i+h,(\gamma'+m^{N_1+r})m^i+h-1]}$ is the $\sigma^i$ image of $x_{ [ \delta,\delta+m^{N_1+r}]}=\sigma^{N_1+r}(a)$, hence 
\[\gamma'm^i+h=\delta m^i \text{ and so } h = 0 \pmod {m^i}.\]
\end{proof}
\begin{proof}[Proof theorem \ref{mainthrm}]
Let $\sigma$ be aperiodic, primitive and $m$-uniform and let $C(\sigma)=(N'+1)m$ where $N'$ is the constant given by Lemma \ref{lemme1}. Let then $k,n \in \mathbb{N}$ and $y\in X(\sigma)$ be fixed. 
 
Let $i \in \mathbb{N}$ be such that $m^{i-1} \leq k < m^i$. Consider then the $k$ consecutive blocks of length $N' m^i+1$ starting at position $n$: the block number $s$ is then $y_{ [ n+s(N'm^i+1),n+(s+1)(N'm^i+1)-1]}$. We have $kC(\sigma) \geq (N'+1)m^i \geq N' m^i+1$. Moreover, using Lemma \ref{lemme1}, we get that two of these blocks, say blocks $s$ and $t$, are equal implies the difference between their starting indices is 0 modulo $m^i$: 
\begin{align}
 y_{ [ n+s(N'm^i+1),n+(s+1)(N'm^i+1)-1]}&=y_{ [ n+t(N'm^i+1),n+(t+1)(N'm^i+1)-1]}
 \\ \Rightarrow n+s(N'm^i+1)&=n+t(N'm^i+1)\pmod{m^i}
 \\ \Rightarrow s&=t\pmod{m^i}.
\end{align}
But since $k$ is smaller than $m^i$ this implies $s=t$, which completes the proof.
\end{proof}

\section{Conclusion}
Using the theory of recognizability, we have been able to improve the previous result to the class of aperiodic fixed points of primitive and $m$-uniform morphisms. The two followig examples show that the conditions  aperiodic and primitive are tight.
Let $\sigma  :
\left\{
	\begin{array}{ll}
		0  \rightarrow 01 \\
		1 \rightarrow 01 
	\end{array}
\right.$. Since $(01)^{\infty}$, which is not aperiodic, does only contain two factor of each length, it cannot contain $k$-anti-powers for $k$ greater than 2.
Let then $\sigma  :
\left\{
	\begin{array}{ll}
		0  \rightarrow 010 \\
		1 \rightarrow 111 
	\end{array}
\right.$. This substitution is not primitive, and give rise to the celebrated Cantor word $c=\sigma^{\infty}(0)$. Since $c$ contains arbitrary long plages of 1, it is clear that Theorem \ref{mainthrm} doesn't apply here. 

In the other hand, this result might be extendable to the class of recognizable substitutions, alas, I was not successful in finding an equivalent to Lemma \ref{lemme1}; this seems to be the key to extend this result. 

\bibliographystyle{alpha}
\bibliography{Biblio1}
\end{document}